\documentclass[12pt]{amsart}

\usepackage{amsmath,mathtools,amsthm,amsfonts,amssymb,verbatim,cases,graphicx,multirow}
\usepackage[usenames,dvipsnames]{xcolor}

\usepackage[letterpaper]{geometry}
\geometry{verbose,tmargin=0.8in,bmargin=0.8in,lmargin=0.8in,rmargin=0.8in}

\usepackage{url}

\newtheorem{theorem}[equation]{Theorem}
\newtheorem{corollary}[equation]{Corollary}

\newtheorem{lemma}[equation]{Lemma}
\newtheorem{proposition}[equation]{Proposition}

\theoremstyle{definition}

\newtheorem{exam}[equation]{Example}
\numberwithin{equation}{section}

\definecolor{mjo}{rgb}{0,0,.9}
\newcommand{\ZZ}{\mathbb{Z}}

\newcommand{\SYM}{\operatorname{Sym}}
\newcommand{\ALT}{\operatorname{Alt}}

\global\long\def\dng{\text{\sf DNG}}
\global\long\def\mex{\operatorname{mex}}
\global\long\def\nim{\operatorname{nim}}
\global\long\def\opt{\operatorname{Opt}}

\global\long\def\pty{\operatorname{pty}}
\global\long\def\type{\operatorname{type}}

\global\long\def\spr{d}

\global\long\def\dih{\operatorname{Dih}}

\newcommand{\notes}[1]{}

\subjclass[2000]{91A46, 20D30}
\keywords{impartial game, maximal subgroup}

\begin{document}

\setlength{\jot}{0pt} 

\title
{Three-person impartial avoidance games for generating finite cyclic, dihedral, and nilpotent groups}

\author{Bret J.~Benesh}
\author{Marisa R. Gaetz}

\address{
Department of Mathematics,
College of Saint Benedict and Saint John's University,
37 College Avenue South,
Saint Joseph, MN 56374-5011, USA
}
\email{bbenesh@csbsju.edu}
\address{
Saint John's Preparatory School,
2280 Water Tower Rd, 
Collegeville, MN 56321-4000, USA
}
\email{marisagaetz@gmail.com}


\date{\today}


\begin{abstract}
We study a three-player variation of the impartial avoidance game introduced by Anderson and Harary.  Three players take turns selecting previously-unselected elements of a finite group.  The losing player is the one who selects an element that causes the set of jointly-selected elements to be a generating set for the group, with the previous player winning and the remaining player coming in second place.  We describe the winning strategy for these games on cyclic, dihedral, and nilpotent groups.
\end{abstract}


\maketitle


\begin{section}{Introduction}


The game \emph{Do Not Generate} was introduced by Anderson and Harary~\cite{anderson.harary:achievement}.  In this game, two players take 
turns selecting previously unselected elements of a finite group until the group is generated by the jointly-selected elements.   The losing player is the first player who selects an element that causes the jointly-selected elements to generate the entire group.  The strategies and nim-numbers for these games were classified in~\cite{BeneshErnstSiebenDNG}.  We will assume that all groups are nontrivial, since \emph{Do Not Generate} cannot be played on the trivial group, as the empty set generates the entire group in this case.

We modify this game to include three players.  The player who generates the group still loses, but we will define the person who plays immediately before the loser to be the winner and the remaining player to be the runner-up.  This ranking gives each player a preference for which of the opponents should win, and a player will help her preferred opponent win if she cannot ensure victory for herself.   This has the effect that a player may have a winning strategy because of what the previous player does, rather than what the player herself does.  

The intuition for the game follows.  Once the game is over, the players will realize that they simply took turns selecting elements from a single maximal subgroup $M$.  The first player will win if $|M| \equiv 1 \mod{3}$ with the third player as runner-up, the second will win if $|M| \equiv 2 \mod{3}$ with the first player as runner-up, and the third will win if $|M|$ is divisible by $3$ with the second player as runner-up.    

The explanations for this game rely on Lagrange's Theorem and Cauchy's Theorem from elementary group theory.  Let $G$ be a finite group and $p$ be a prime number dividing $|G|$.  Cauchy's Theorem guarantees there is an element $x$ of order $p$ in $G$, and Lagrange's Theorem then guarantees that the final maximal subgroup of the game will be divisible by $p$ if $x$ is ever selected.  In particular, Player~3 will be guaranteed victory by Lagrange's Theorem once an element of order $3$ is played, and the existence of such an element is guaranteed by Cauchy's Theorem if $3$ divides the order of the group. 

\end{section}


\begin{section}{Cyclic groups}


We start by considering a cyclic group $G$ with generator $g$ and order $n$. It is well-known that every maximal subgroup $M$ of $G$ has order $n/p$ for some prime $p$ in this case.  So in the case of cyclic groups, Player~1 can pick the final maximal subgroup by selecting $g^p$ to generate a maximal subgroup of order $n/p$ for some prime $p$.   This allows us to conclude the following.

\begin{theorem}\label{thm:DNGCyclic}
Let $G$ be a nontrivial finite cyclic group of order $n$.   

\begin{enumerate} 
\item If $n \equiv 1 \mod {3}$ and there is a prime number $p$ dividing $n$ such that $p \equiv 1 \mod{3}$, then Player~1 has a winning strategy. 
\item If $n \equiv 1 \mod {3}$ and every prime number $p$ dividing $n$ is such that $p \equiv 2 \mod{3}$, then Player~2 has a winning strategy. 
\item If $n \equiv 2 \mod {3}$, then Player~1 has a winning strategy. 
\item If $n \equiv 0 \mod {9}$, then Player~3 has a winning strategy. 
\item If $n \equiv 3 \mod {9}$, then Player~1 has a winning strategy. 
\item If $n \equiv 6 \mod {9}$, then Player~2 has a winning strategy. 
\end{enumerate} 
\end{theorem}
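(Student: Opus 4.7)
The plan hinges on the simple maximal subgroup lattice of a cyclic group: every maximal subgroup of $G = \langle g \rangle$ has the form $\langle g^p \rangle$ for a prime $p \mid n$, has order $n/p$, and any two distinct such subgroups jointly generate $G$ (since distinct primes are coprime). Consequently, once a move locks the cumulative set into a unique maximal subgroup $M$, all subsequent players are forced to play inside $M$ or lose immediately; in particular, playing $g^p$ for a prime $p \mid n$ commits the game to $M = \langle g^p \rangle$. By the intuition from the introduction, the winner is then determined by $|M| \pmod 3$: Player~1 wins if $|M| \equiv 1$, Player~2 wins if $|M| \equiv 2$, and Player~3 wins if $3 \mid |M|$.

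I would first formalize this ``locking'' observation and confirm that under sensible play (each player prefers not to lose when a safe move is available) the game exhausts $M$ before ending. The proof then reduces to a case-by-case computation of $n/p \pmod 3$. For part (1), the hypotheses give a prime $p \equiv 1 \pmod 3$ dividing $n \equiv 1 \pmod 3$, so $n/p \equiv 1 \pmod 3$ and Player~1 plays $g^p$ to win. For part (3), I would first note that $n \equiv 2 \pmod 3$ forces some prime $p \equiv 2 \pmod 3$ to divide $n$ (otherwise $n$ would be a product of primes $\equiv 1 \pmod 3$, contradicting the hypothesis); then $n/p \equiv 1 \pmod 3$ and again Player~1 plays $g^p$. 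Part (5) is immediate, since $n \equiv 3 \pmod 9$ gives $n/3 \equiv 1 \pmod 3$.

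In the remaining cases no first move lets Player~1 achieve $|M| \equiv 1 \pmod 3$, and instead I must rule out any better strategy for Player~1. For part (2), every prime $p \mid n$ gives $n/p \equiv 2 \pmod 3$, so Player~2 wins no matter which maximal subgroup the play eventually settles into. For part (4), the hypothesis $9 \mid n$ yields $3 \mid n/p$ for every prime $p \mid n$, so Player~3's victory is unavoidable. Case (6) is the most delicate and will be the main obstacle: Player~1 faces a choice between $p = 3$ (giving $n/3 \equiv 2 \pmod 3$, so Player~2 wins and Player~1 is runner-up) and $p \neq 3$ (giving $3 \mid n/p$, so Player~3 wins and Player~1 loses). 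Since runner-up strictly beats loser, Player~1 plays $g^3$ and Player~2 inherits the win; one must also verify that no deviation by Player~2 or Player~3 inside the locked $\langle g^3 \rangle$ can change the outcome, which follows because every safe move is equivalent and every unsafe move is an immediate loss.
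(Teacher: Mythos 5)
Your proposal is correct and follows essentially the same route as the paper: identify the maximal subgroups of a cyclic group as $\langle g^p\rangle$ of order $n/p$, let Player~1 lock the game into a favorable maximal subgroup when one of order $\equiv 1 \pmod 3$ exists, and otherwise argue from the residues $n/p \pmod 3$ which player inevitably wins (with Player~1 steering toward $\langle g^3\rangle$ in the $n\equiv 6 \pmod 9$ case to secure runner-up). Your explicit ``locking'' observation is a slightly more careful formalization of the game dynamics that the paper relegates to the intuition in its introduction, but the substance of the argument is the same.
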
 
\begin{proof}
Every maximal subgroup has order $n/p$ for some prime divisor $p$ of $n$.  If $n \equiv 1 \mod{3}$ and there is a prime divisor $p$ of $n$ such that $p \equiv 1 \mod{3}$, then  Player~1 can generate a maximal subgroup $\langle g^p \rangle$ of order $n/p \equiv 1 \mod{3}$ to win.  Similarly, Player~1 can win by choosing $g^p$ for any prime $p \equiv 2 \mod{3}$ if $n \equiv 2 \mod{3}$ (such a prime $p$ must exist, since $n$ would be equivalent to $0$ or $1$ modulo $3$ otherwise), and Player~1 can win by choosing $g^3$ if $n \equiv 3 \mod{9}$.  

If $n \equiv 1 \mod{3}$ and every prime $p$ dividing $n$ is such that $p \equiv 2 \mod{3}$, then every maximal subgroup will have order $n/p \equiv 2 \mod{3}$ for some prime divisor $p$ of $n$.  Therefore, Player~2 will win, regardless of strategy.  If $n \equiv 6 \mod{9}$, then $n/3 \equiv 2 \mod{3}$ and $n/p \equiv 0 \mod{3}$ for every prime divisor $p$ of $n$ that is not $3$.  Then Player~1 will choose $g^3$ to generate a maximal subgroup of order $n/3 \equiv 2 \mod{3}$ so that Player~2 wins and Player~1 is runner-up.  Finally, if $n \equiv 0 \mod{9}$, then $n/p \equiv 0 \mod{3}$ for any prime divisor $p$ of $n$, including $p=3$.  In this case, Player~3 wins.

\end{proof}

\end{section}


\begin{section}{Dihedral groups}

Let $D_n$ denote a dihedral group of order $2n$.  We start with a statement about the maximal subgroups of $D_n$, which follows easily from the well-known fact that every subgroup of a dihedral group is either cyclic or dihedral.

\begin{proposition}\label{prop:DihedralMaximals}
If $M$ is a maximal subgroup of a dihedral group $D_n$, then either $|M|=n$ or $|M|=2n/p$ for some prime $p$ dividing $n$.
\end{proposition}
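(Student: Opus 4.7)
The plan is to use the dichotomy recalled just before the proposition---every subgroup of $D_n$ is cyclic or dihedral---and case-split according to which form $M$ takes. Let $r$ generate the rotation subgroup $\langle r \rangle$ of order $n$, and let $s$ denote a reflection.

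First I would handle the cyclic case. A cyclic subgroup of $D_n$ of order at least $3$ cannot contain a reflection (reflections have order $2$, so they would have to be an even power of the generator; but the generator would then necessarily be a rotation, and $\langle r \rangle$ contains no reflections). So every cyclic subgroup either lies in $\langle r \rangle$, or is generated by a single reflection and has order $2$. If $M \subseteq \langle r \rangle$, then since $\langle r \rangle$ is a proper index-$2$ subgroup of $D_n$, maximality of $M$ forces $M = \langle r \rangle$, and hence $|M| = n$. If $M = \langle s \rangle$ has order $2$, I would check directly that $M$ is maximal precisely when $n$ is prime: otherwise, for any prime divisor $q < n$ of $n$, the subgroup $\langle r^{n/q}, s\rangle \cong D_q$ lies strictly between $M$ and $D_n$. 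When $n$ is prime, $|M| = 2 = 2n/n$ fits the form $2n/p$ with $p = n$.

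For the non-cyclic dihedral case, write $M \cong D_m$ with $|M| = 2m$ and $m \geq 2$. Then $M \cap \langle r\rangle$ is the unique cyclic subgroup of order $m$ inside $\langle r \rangle$, namely $\langle r^{n/m}\rangle$, which forces $m \mid n$. By Lagrange any intermediate subgroup $M \subseteq H \subseteq D_n$ has order $2km$ for some $k \mid (n/m)$, so if $n/m$ had a proper prime factor $p$, then $\langle r^{n/(pm)}, s'\rangle \cong D_{pm}$ (with $s'$ any reflection in $M$) would be a proper intermediate dihedral subgroup, contradicting maximality of $M$. Hence $n/m$ itself must be prime, and setting $p = n/m$ yields $|M| = 2m = 2n/p$.

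The only genuinely fiddly step is the bookkeeping in the reflection-only cyclic subcase: pinning down exactly when $\langle s\rangle$ is maximal and verifying that its order $2$ really does match $2n/p$ for an appropriate prime divisor $p$ of $n$. Everything else reduces to a short index computation via Lagrange's theorem combined with the cyclic-or-dihedral dichotomy.
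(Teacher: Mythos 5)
Your proof is correct and takes exactly the route the paper intends: the paper states this proposition without proof, remarking only that it ``follows easily from the well-known fact that every subgroup of a dihedral group is either cyclic or dihedral,'' and your argument is a careful fleshing-out of precisely that case split (cyclic subgroups landing in $\langle r\rangle$ or being reflection subgroups, dihedral subgroups $D_m$ forced to have $n/m$ prime by maximality). Nothing in your write-up diverges from or adds beyond what the authors had in mind.
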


We can now determine the outcomes of the avoidance game on dihedral groups by considering the orders of maximal subgroups.

\begin{theorem}\label{thm:DNGDihedral}
Let $D_n$ be a dihedral group.  Then 
\begin{enumerate}
\item Player~1 has a winning strategy if and only if $n \equiv 1 \mod{3}$. 
\item Player~2 has a winning strategy if and only if $n \equiv 2 \mod{3} \text{ or } n \equiv 3 \mod{9}$. 
\item Player~3 has a winning strategy if and only if $n \equiv 0 \mod{9} \text{ or } n \equiv 6 \mod{9}$. 
\end{enumerate}
\end{theorem}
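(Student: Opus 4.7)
The plan is to combine the introduction's parity criterion---Player~$k$ wins when $|M|\equiv k\pmod 3$, where $k=3$ represents $|M|\equiv 0\pmod 3$---with Proposition~\ref{prop:DihedralMaximals}, which constrains $|M|$ to lie in $\{n\}\cup\{2n/p : p\text{ prime},\, p\mid n\}$. First I would tabulate each achievable $|M|$ modulo~$3$: $|M|=n$ is $\equiv n\pmod 3$; $|M|=2n/p$ with $3\mid n$ and $p\neq 3$ is $\equiv 0\pmod 3$; $|M|=2n/3$ is $\equiv 2\pmod 3$ when $n\equiv 3\pmod 9$ and $\equiv 1\pmod 3$ when $n\equiv 6\pmod 9$; and when $3\nmid n$, $|M|=2n/p$ is $\equiv 1\pmod 3$ if $p\equiv 1\pmod 3$ and $\equiv 2\pmod 3$ if $p\equiv 2\pmod 3$.

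Next I would handle the straightforward cases. For $n\equiv 1\pmod 3$, Player~1 opens with $r$; thereafter every safe move must lie in the maximal subgroup $\langle r\rangle$ (else $D_n$ is generated), so $|M|=n\equiv 1\pmod 3$ and Player~1 wins. For $n\equiv 0\pmod 9$, every achievable $|M|$ is divisible by~$3$, so Player~3 wins regardless of strategy. For $n\equiv 2\pmod 3$, some prime $p\mid n$ satisfies $p\equiv 2\pmod 3$ (else the product of the prime divisors of $n$ would be $\equiv 1\pmod 3$); Player~2 answers a rotation opening with $r$ (forcing $|M|=n\equiv 2\pmod 3$) and a reflection opening with such an $r^p$ (forcing $|M|=2n/p\equiv 2\pmod 3$), securing the win.

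The delicate cases are $n\equiv 3\pmod 9$ and $n\equiv 6\pmod 9$, in both of which $|M|=2n/3$ is the unique achievable order not divisible by~$3$. When $n\equiv 3\pmod 9$, $2n/3\equiv 2\pmod 3$ makes that outcome a Player~2 win with Player~1 as runner-up; since $|M|\equiv 1\pmod 3$ is unreachable, runner-up is Player~1's best possible outcome, so Player~1 opens with $r^3$, which restricts the maximal subgroups containing the selected set to $\langle r\rangle$ and the three $\langle r^3, r^i s\rangle$ of order $2n/3$, and Player~2 (strictly preferring winning to being runner-up) replies with a reflection, fixing $\langle S\rangle=\langle r^3, r^i s\rangle$. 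When $n\equiv 6\pmod 9$, $2n/3\equiv 1\pmod 3$ would make Player~1 win, but Players~2 and~3 both strictly prefer any other outcome; Player~2 blocks by answering a rotation opening with $r$ and a reflection opening with $r^p$ for some prime $p\mid n$ with $p\neq 3$ (which exists because $n/3\geq 2$ is coprime to $3$), in either case forcing $3\mid|M|$ and a Player~3 win.

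The main obstacle will be verifying that these proposed responses from Player~2 really are optimal against every possible opening by Player~1---including rotations $r^k$ whose exponents are products of several primes---and that no subsequent move by Player~3 can redirect $\langle S\rangle$ into a maximal subgroup with a different residue. This reduces to a finite case check using Proposition~\ref{prop:DihedralMaximals} together with the fact that once $\langle S\rangle$ equals a maximal subgroup, any element outside it immediately generates $D_n$ and is shunned by any rational player.
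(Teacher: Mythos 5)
Your proposal is correct and follows essentially the same route as the paper: reduce to the orders $n$ and $2n/p$ of maximal subgroups via Proposition~\ref{prop:DihedralMaximals}, compute their residues modulo $3$, and exhibit the same explicit moves ($r$ when $n\equiv 1$; $r$ or $r^q$ with $q\equiv 2$ when $n\equiv 2$; $r^3$ followed by a reflection when $n\equiv 3\pmod 9$; Player~2's blocking replies when $n\equiv 6\pmod 9$), including the same preference-based cooperation arguments. The only nitpick is that when Player~1 opens with $r$ itself, Player~2's prescribed reply of ``$r$'' must be replaced by any other rotation (e.g., the identity), a detail the paper notes explicitly.
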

\begin{proof}
Let $r$ generate the cyclic subgroup of rotations of $D_n$.  Suppose that $n \equiv 1 \mod{3}$.  Then Player~1 can win by selecting $r$ to generate a subgroup of order $n \equiv 1 \mod{3}$.  Now suppose that $n \equiv 2 \mod{3}$, in which case $n$ must have a prime divisor $q$ such that $q \equiv 2 \mod{3}$.  If Player~1 chooses a rotation, then Player~2 can choose either $r$ (or the identity, if Player~1 chooses $r$) to generate the cyclic maximal subgroup of order $n$.  If Player~1 chooses a reflection $f$, then Player~2 can choose the rotation $r^q$.  Then $\langle f, r^q \rangle \cong D_{n/q}$ is maximal with $|\langle f,r^q \rangle| = 2n/q \equiv 2(2)/2 = 2 \mod{3}$.  Therefore, Player~2 has a winning strategy if $n \equiv 2 \mod{3}$.

Recall that every maximal subgroup of $D_n$ has order $n$, $2n/p$ for some prime divisor $p \ne 3$ of $n$, or $2n/3$ by Proposition~\ref{prop:DihedralMaximals}.   If $n \equiv 0 \mod{9}$, every possible order of a maximal subgroup is divisible by $3$, so Player~3 will win this game regardless of strategy.  If $n \equiv 3 \mod{9}$, then $n \equiv 0 \mod{3}$, $2n/3 \equiv 2 \mod{3}$, and $2n/p \equiv 0 \mod{3}$ for all primes $p$ not equal to $3$ that divide $n$.  Therefore, either Player~2 or Player~3 will win.  Because Player~1 prefers that Player~2 wins, Player~1 can select an element that generates a subgroup of the cyclic rotations of order $n/3$, and Player~2 can select any reflection to generate a maximal subgroup of order $2n/3 \equiv 2 \mod{3}$.  

Finally, assume that $n \equiv 6 \mod{9}$.  Then $n \equiv 0 \mod{3}$, $2n/3 \equiv 1 \mod{3}$, and $2n/p \equiv 0 \mod{3}$ for all primes $p$ dividing $n$ that are not $3$.  Therefore, either Player~1 or Player~3 will win, although Player~1 cannot win on the first turn because maximal subgroups of order $2n/3$ are not cyclic.  Because Player~2 prefers that Player~3 wins, Player~2 will ensure a victory for Player~3.  If Player~1 selects a rotation, Player~2 will select a generating rotation  to create a maximal subgroup of order $n \equiv 0 \mod{3}$.  If Player~1 selects a reflection $f$, then Player~2 will select a rotation $x$ that generates a rotation subgroup of order $2n/p$ for any prime $p$ not equal to $3$.  Then $|\langle f, x \rangle|=2n/p \equiv 2(0)/p \equiv 0 \mod{3}$, ensuring victory for Player~3.
\end{proof}
\end{section}


\begin{section}{Nilpotent groups}

Recall that nilpotent groups are a generalization of abelian groups and have the following properties.

\begin{theorem}\cite[Theorem~8.19]{Isaacs1994}\label{thm:NilpProps}
Let $G$ be a finite group. Then the following are equivalent.
\begin{enumerate}
\item $G$ is nilpotent.
\item Every maximal subgroup of $G$ is normal in $G$.
\item $G$ is isomorphic to a direct product of its Sylow subgroups. 
\end{enumerate}
\end{theorem}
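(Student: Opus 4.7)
The plan is to prove the three conditions equivalent cyclically as $(1) \Rightarrow (2) \Rightarrow (3) \Rightarrow (1)$; all three implications are standard textbook material (as signaled by the citation to Isaacs), so I would aim for a concise reconstruction rather than new ideas, and would in practice simply defer to the cited reference.

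For $(1) \Rightarrow (2)$, the key tool is the \emph{normalizer condition}: in a nilpotent group, $H \lneq N_G(H)$ for every proper subgroup $H$. I would prove this by induction on the nilpotency class. If $Z := Z(G) \not\subseteq H$, then $Z \leq N_G(H)$ and already $HZ \gneq H$ lies inside $N_G(H)$. Otherwise $Z \leq H$, and in the smaller-class quotient $G/Z$ the inductive hypothesis gives $N_{G/Z}(H/Z) \gneq H/Z$; lifting back shows $N_G(H) \gneq H$. Applied to a maximal subgroup $M$, this forces $N_G(M) = G$, so $M \trianglelefteq G$.

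For $(2) \Rightarrow (3)$, the goal is to show each Sylow subgroup is normal; once this is established, the direct-product decomposition follows routinely because elements of coprime prime-power orders commute and the Sylows together have the correct order. To show a Sylow $p$-subgroup $P$ is normal, I would argue by contradiction: if $N_G(P)$ is proper, choose a maximal subgroup $M$ with $N_G(P) \leq M$, so $M \trianglelefteq G$ by hypothesis. Since $P$ is Sylow in $G$ it is also Sylow in $M$, and the Frattini argument then yields $G = M \cdot N_G(P) = M$, a contradiction. I expect this Frattini step to be the one substantive ingredient; everything else is bookkeeping.

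For $(3) \Rightarrow (1)$, I would quote two standard facts: finite $p$-groups are nilpotent (by the class equation their centers are nontrivial, and induction on order produces an ascending central series), and a finite direct product of nilpotent groups is nilpotent (the termwise products of the upper central series terminate at the whole group). Combining them gives nilpotence of any direct product of Sylow subgroups, closing the cycle.
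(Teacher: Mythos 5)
Your proposal is correct: the cyclic scheme $(1)\Rightarrow(2)$ via the normalizer condition, $(2)\Rightarrow(3)$ via the Frattini argument applied to a maximal subgroup containing $N_G(P)$, and $(3)\Rightarrow(1)$ from nilpotence of $p$-groups and of direct products is exactly the standard argument. The paper offers no proof of its own here---it simply cites Isaacs---so there is nothing to contrast; your reconstruction matches the cited source's approach and has no gaps.
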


\begin{proposition}\cite[Problem~8.11]{Isaacs1994}\label{prop:NilpMaximalPrime}
If $M$ is a maximal subgroup of a finite nilpotent group $G$, then $|M|=|G|/p$ for some prime divisor $p$ of $|G|$.
\end{proposition}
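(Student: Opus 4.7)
The plan is to use the equivalence from Theorem~\ref{thm:NilpProps} that in a finite nilpotent group every maximal subgroup is normal, and then analyze the resulting quotient. Specifically, I would first invoke Theorem~\ref{thm:NilpProps} to conclude that $M \trianglelefteq G$, so the quotient $G/M$ is a well-defined group.

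Next I would use the correspondence theorem together with the maximality of $M$. Since $M$ is a maximal subgroup of $G$, the only subgroups of $G$ containing $M$ are $M$ itself and $G$, so by the correspondence theorem $G/M$ has no nontrivial proper subgroups. A finite group with this property must be cyclic of prime order: any non-identity element $xM \in G/M$ generates a nontrivial subgroup, which must be all of $G/M$, so $G/M$ is cyclic; and if $|G/M|$ had a proper divisor $d>1$ there would be a proper nontrivial subgroup of order $d$, contradicting the hypothesis. Therefore $|G/M| = p$ for some prime $p$, and Lagrange's theorem forces $p$ to divide $|G|$.

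Finally, applying Lagrange's theorem one more time gives $|M| = |G|/|G:M| = |G|/p$, as required.

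The main (minor) obstacle is simply recognizing the standard fact that a finite group whose only subgroups are the trivial one and itself must be cyclic of prime order; everything else is a direct invocation of Theorem~\ref{thm:NilpProps} and standard group-theoretic machinery, so the proof should be very short.
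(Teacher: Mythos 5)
Your proof is correct: normality of maximal subgroups from Theorem~\ref{thm:NilpProps}, the correspondence theorem to see that $G/M$ has no nontrivial proper subgroups and hence has prime order, and Lagrange's theorem to conclude $|M|=|G|/p$ is exactly the standard argument. The paper offers no proof of its own---it simply cites Isaacs, Problem~8.11---so there is nothing to compare against, but your argument is complete and is the expected one.
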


We will let $d(G)$ denote the size of a minimal generating set of a group $G$.  We can now determine the outcomes of the avoidance game on finite nilpotent groups by considering the orders of maximal subgroups.

\begin{theorem}\label{thm:DNGNilpotent}
Let $G$ be a nontrivial finite nilpotent group, and let $H$ be the direct product of Sylow $q$-groups of $G$ such that $q \equiv 1 \mod{3}$ and $K$ be the direct product of Sylow $r$-groups of $G$ such that $r \equiv 2 \mod{3}$.
\begin{enumerate}
\item Player~1 has a winning strategy in the following cases.
\begin{enumerate}
\item $|G| \equiv 1 \mod{3}$ and $2d(H) \geq d(K)+1$
\item $|G| \equiv 2 \mod{3}$ and $2d(K) \geq d(H)+1$
\item $|G| \equiv 3 \mod{9}$ and $d(G)=1$
\end{enumerate}
\item Player~2 has a winning strategy in the following cases.
\begin{enumerate}
\item $|G| \equiv 1 \mod{3}$ and $2d(H) < d(K)+1$
\item $|G| \equiv 2 \mod{3}$ and $2d(K) < d(H)+1$
\item $|G| \equiv 6 \mod{9}$ and $d(G)\leq 2$
\end{enumerate}
\item Player~3 has a winning strategy in the following cases.
\begin{enumerate}
\item $|G| \equiv 0 \mod{9}$
\item $|G| \equiv 3 \mod{9}$ and $d(G) \geq 2$
\item $|G| \equiv 6 \mod{9}$ and $d(G) \geq 3$
\end{enumerate}
\end{enumerate}
\end{theorem}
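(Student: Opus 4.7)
The plan is to case-split on $|G|\bmod 9$, using Proposition~\ref{prop:NilpMaximalPrime} to enumerate the orders of maximal subgroups modulo $3$ and applying the heuristic from the introduction that the game ends once the selected set equals some maximal subgroup $M$, with the winner determined by $|M|\bmod 3$. For nilpotent $G=\prod_p G_p$ every subgroup factors as a direct product of its Sylow projections, so $\langle S\rangle$ equals a maximal subgroup iff for exactly one prime $p$ the projection $\pi_{G_p}(\langle S\rangle)$ is maximal in $G_p$ while $\pi_{G_q}(\langle S\rangle)=G_q$ for every other prime $q$ dividing $|G|$.

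When $|G|\equiv 0\pmod 9$ every $|G|/p$ is divisible by $3$ and Player~$3$ wins regardless. When $|G|\equiv 3$ or $6\pmod 9$ the Sylow $3$-subgroup equals $\mathbb{Z}_3$, and $M_3:=\prod_{p\neq 3}G_p$ is the unique maximal subgroup whose order is not divisible by $3$. The analysis reduces to whether the game can be forced into $M_3$; the cyclic case $d(G)=1$ is covered by Theorem~\ref{thm:DNGCyclic}, so assume $d(G)\geq 2$. To force the game into $M_3$ (case (2c), $|G|\equiv 6\pmod 9$, $d(G)\leq 2$), Player~$1$ plays an element $x_1\in M_3$ whose projection to every Sylow factor of $M_3$ lies outside the Frattini subgroup of that factor, and Player~$2$ completes $\{x_1,x_2\}$ to a generating set of $M_3$ by the Burnside basis theorem applied to the two-generated nilpotent group $M_3$. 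To force the game out of $M_3$ (cases (3b), (3c)), the blocking player---Player~$2$ in (3b) and Player~$3$ in (3c)---plays at her earliest opportunity an element with nontrivial Sylow-$3$ projection whose $M_3$-component lies in a maximal subgroup of $M_3$ containing all previously played $M_3$-projections; the rank hypothesis on $d(M_3)$ (at least $2$ in (3b), at least $3$ in (3c)) guarantees such a maximal subgroup exists, so the generated subgroup acquires a nontrivial $3$-part while remaining proper in $G$, committing the game to some $M_p$ with $p\neq 3$ and $|M_p|\equiv 0\pmod 3$.

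For $|G|$ coprime to $3$ no maximal subgroup has order divisible by $3$; Player~$3$ cannot win but prefers Player~$1$ to Player~$2$ and so allies with Player~$1$. Passing to the Frattini quotients $V_p=G_p/\Phi(G_p)$, the game becomes a race on projection spans, ending in a maximal $M_p$ exactly when in one $V_p$ the collected projections span a hyperplane while in each other $V_{p'}$ they span all of $V_{p'}$. Players~$1$ and $3$ play elements supported entirely in $K$ (so $\pi_H(x)=0$), contributing only to growing $\pi_K$; Player~$2$ counters by contributing new $\pi_H$-directions aimed at eventually spanning $V_H$ and thereby ruling out every $M_q$ with $q \mid |H|$. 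Because Player~$2$'s $k$-th move occurs at turn $3k-1$, before her $d(H)$-th move Players~$1$ and $3$ have together made $2d(H)-1$ moves, and these suffice to span $V_K$ (which requires $d(K)$ linearly independent vectors) iff $2d(H)-1\geq d(K)$, precisely condition (1a). Once $\pi_K=K$ is achieved, any move by Player~$2$ that would extend $\pi_H$ beyond the subgroup determined by her previous moves would generate all of $G$ and cost her the game, so she is trapped inside the chosen $M_q$ and the game concludes at move $|M_q|+1\equiv 2\pmod 3$ with Player~$2$ losing. The mirror strategy handles cases (1b), (2a), and (2b) by exchanging the roles of $H$ and $K$.

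The main obstacle will be verifying that no player can profit by deviating from the stated strategy---in particular, that the player losing the counting race cannot switch target primes mid-game, cannot force her ally to waste moves, and cannot improve her standing by making a deliberate losing move to install a preferred opponent---which I would handle by checking, at each committable endgame state, that the runner-up assignments align with each player's incentives.
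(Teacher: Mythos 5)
Your proposal follows essentially the same route as the paper's proof: the identical case split on $|G|\bmod 9$ and $d(G)$, the same use of Proposition~\ref{prop:NilpMaximalPrime} to read off maximal-subgroup orders modulo $3$, the same blocking move by an element with nontrivial Sylow-$3$ part (legal because $d(H\times K)\geq 2$, resp.\ $\geq 3$), and the same two-for-one race between generating $K$ and generating $H$ yielding $2d(H)\geq d(K)+1$; your Frattini-quotient/Burnside-basis language just makes explicit what the paper calls ``some simple algebra,'' and your flagged concern about deviations is left at the same level of informality as in the paper. (One small slip: once $K$ is locked in, Player~2 can still extend $\pi_H$ up to a maximal subgroup of $H$ without generating $G$ --- but this does not change the conclusion that the final maximal subgroup contains $K$.)
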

\begin{proof}

First, note that these results agree with those from Theorem~\ref{thm:DNGCyclic} in the case that $d(G)=1$.  The only nontrivial observations to make are first that $d(K)=1$ and $d(H) \in \{0,1\}$ if $|G| \equiv 2 \mod{3}$, and second that, when $|G| \equiv 1 \mod{3}$, $d(H)=0$ if and only if every prime number $p$ dividing $G$ is equivalent to $2$ modulo $3$.   So assume that $d(G) \geq 2$.

If $|G| \equiv 0 \mod{9}$, then $|G|/p \equiv 0 \mod{3}$ for all such $p$ dividing $|G|$, including $p=3$.  Therefore, Player~3 has a winning strategy.

Let $T$ denote the Sylow $3$-subgroup of $G$ for the remainder of the proof, and suppose for the remainder of the paragraph that $|G| \equiv 3 \mod{9}$ so that $|T|=3$ and $|H \times K| \equiv 1 \mod{3}$.  This implies that $d(H \times K) \geq 2$, lest $G$ be cyclic.  The maximal subgroups must each have order either $|G|/3 = |H \times K| \equiv 1 \mod{3}$ or $|G|/p \equiv 0\mod{3}$ for some prime $p$ dividing $|G|$.  Thus Player~2 cannot win and will instead help Player~3 win.  Suppose Player~1 selects $x \in G$. If $3$ divides $|\langle x \rangle|$, then Player~3 will win.  Otherwise, Player~2 can select a nontrivial element $t$ of $T$, which is a legal play because $\langle x,t \rangle = \langle t \rangle \times \langle x \rangle < T \times (H \times K) = G$ since $d(H \times K) \geq 2$.  This guarantees victory for Player~3.  

Now suppose that $d(G)=2$ and $|G| \equiv 6 \mod{9}$.  Again $|T|=3$, but now $|H \times K| \equiv 2 \mod{3}$ with $d(H \times K) = 2$, lest $G$ be cyclic.  The maximal subgroups must each have order $|G|/3 \equiv 2 \mod{3}$ or $|G|/p \equiv 0\mod{3}$ for some prime $p$ dividing $|G|$.  Thus Player~1 cannot win and will instead help Player~2 win.  Then Player~1 and Player~2 can each choose a generator of $H \times K$, which will result in a maximal subgroup of order $|H \times K|=|G|/3 \equiv 2 \mod{3}$ since $d(H \times K) =2$.  Therefore, Player~2 has a winning strategy. 

So now suppose that $d(G) \geq 3$ and $|G| \equiv 6 \mod{9}$.  Again, $|T|=3$ and $d(H \times K) \geq 3$ since $d(G) \geq 3$.  Suppose Players~1 and~2 choose the elements $x,y \in G$.  If $3$ divides $|\langle x,y\rangle|$, then Player~3 will win.  Otherwise,  Player~3 can choose a nontrivial element $t$ of $T$ so that $\langle t, x ,y \rangle = \langle t \rangle \times \langle x,y\rangle = T \times \langle x,y \rangle < T \times (H \times K)=G$ since $d(H \times K) \geq 3$. Therefore, Player~3 has not generated $G$, and every maximal subgroup containing $\langle x, y, t\rangle$ will have order divisible by $3$, which guarantees victory for Player~3.

Therefore, we may assume that $3$ does not divide $|G|$ and $T$ is trivial, which means that Player~3 cannot win.  Then Player~3 will help Player~1 and, after the first two elements are selected, Player~1 effectively selects two elements for every element Player~2 selects an element since Player~3 helps Player~1.  The game will end with a maximal subgroup $M$ of order $|G|/p$ for some prime $p$ dividing $|G|$.  If $|G| \equiv 1 \mod{3}$, then Player~1 (and hence, Player~3) wants $p \equiv 1 \mod{3}$ and Player~2 wants $p \equiv 2 \mod{3}$.  Player~1's strategy will be to choose elements to generate $K$, since $K \leq M$ implies that $|G|/p = |M|  \equiv 1 \mod{3}$ for some $p$ corresponding to a Sylow subgroup in $H$.  Player~2's strategy will be to generate $H$ for similar reasons.  Because Player~1 essentially gets to choose two generators of $K$ for every generator of $H$ that Player~2 chooses, we see after some simple algebra that Player~1 (with Player~3's help) will be able to generate $K$ before Player~2 can generate $H$ if and only if $2d(H) \geq d(K)+1$.  A similar argument shows that Player~1 wins when $|G| \equiv 2 \mod{3}$ if and only if $2d(K) \geq d(H)+1$.

\end{proof}

\end{section}


\begin{section}{General Results}


Our first result demonstrates conditions under which Player~3 is guaranteed victory.

\begin{proposition}\label{prop:ManyGenerators}
If $G$ is a finite group such that $3$ divides $|G|$ and $d(G) \geq 4$, then Player~3 has a winning strategy.
\end{proposition}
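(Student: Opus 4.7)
The plan is to show that Player~3 can force the final maximal subgroup $M$ to have order divisible by $3$. As explained in the introduction, $3 \mid |M|$ is exactly the condition for Player~3 to win: in that case the total number of selected elements is $|M|+1 \equiv 1 \pmod{3}$, so the generating move is the $(3k+1)$-th move for some $k$, which is a move by Player~1, making Player~3 the immediate predecessor and hence the winner.

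The key reduction is that, by Lagrange's Theorem, if \emph{at any point} during the game the subgroup generated by the selected elements has order divisible by $3$, then every subgroup containing it does too, in particular the final maximal subgroup $M$. So it suffices for Player~3 to guarantee on her first move (the third move of the game) that the subgroup generated so far has order divisible by $3$. Since $d(G) \geq 4$, no set of three or fewer elements generates $G$, so the game necessarily lasts past the third move and any element Player~3 picks on her first turn is a legal move.

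Let $x_1,x_2$ denote the first two selections. There are two cases. If $3$ already divides $|\langle x_1,x_2 \rangle|$, Player~3 may select any unchosen element and is already done. Otherwise, Cauchy's Theorem gives an element $t \in G$ of order $3$; moreover $t \notin \{x_1,x_2\}$, for if $t=x_i$ then $3=|\langle t \rangle|$ would divide $|\langle x_1,x_2 \rangle|$, contradicting the case assumption. Player~3 selects this $t$; then $\langle x_1,x_2,t\rangle \supseteq \langle t \rangle$ has order divisible by $3$. In either case, after Player~3's move the generated subgroup has order divisible by $3$, so $3 \mid |M|$ and Player~3 wins.

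There is essentially no deep obstacle: Cauchy's Theorem supplies the element of order $3$, the hypothesis $d(G) \geq 4$ guarantees Player~3's chosen move is always legal, and Lagrange's Theorem propagates divisibility by $3$ forward to the final $M$. The only delicate point is the case split, needed to ensure the element of order $3$ Player~3 wants to play has not already been taken by Players~1 or~2.
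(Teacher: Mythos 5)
Your proof is correct and takes essentially the same route as the paper's: Cauchy's Theorem supplies an element $t$ of order $3$, the hypothesis $d(G)\geq 4$ guarantees that selecting it on the third move is legal, and Lagrange's Theorem propagates divisibility by $3$ to the final maximal subgroup. Your case split handling the possibility that an element of order $3$ (or an element generating a subgroup of order divisible by $3$) was already chosen is a small precaution that the paper's one-line argument glosses over, but it is not a different method.
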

\begin{proof}
Suppose that Players 1 and 2 select elements $x$ and $y$ on the first two turns.  By Cauchy's Theorem, there is an element $t$ of order $3$ in $G$, and $\langle x,y,t \rangle < G$ since $d(G) \geq 4$.  Then Player~3 wins by selecting $t$, since $t$ will be in the final maximal subgroup $M$ of the game ensuring that $|M| \equiv 0 \mod{3}$.
\end{proof}

Our next result depends on some definitions.  Let $G$ be a noncyclic finite group, and denote the set of maximal subgroups of $G$ by $\mathcal{M}$.  We say that a subset $\mathcal{X}$ of $\mathcal{M}$ \emph{$n$-covers $G$} if for every $n$ elements $g_1,\ldots,g_n \in G$, there is a maximal subgroup $M \in \mathcal{X}$ such that $g_1,\ldots,g_n \in M$.   We let $\mathcal{M}_3$ be the maximal subgroups of $G$ with orders divisible by $3$.   We offer a first approximation for the $3$-player analogue to the main theorems in ~\cite{BeneshErnstSiebenDNG} below.

\begin{theorem}\label{thm:Coverings} 
Let $G$ be a finite group.  Then
\begin{enumerate} 
\item If $\mathcal{M}_3$ $2$-covers $G$, then Player~3 has a winning strategy.
\item\label{item:1Cover} If $\mathcal{M}_3$ $1$-covers $G$ but does not $2$-cover $G$, then Player~1 does not have a winning strategy.
\item\label{item:0Cover} Otherwise, Player~3 does not have a winning strategy.
\end{enumerate} 
\end{theorem}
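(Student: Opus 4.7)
The plan is to treat the three cases separately, guided by the intuition from the introduction that the game's outcome is governed by $|M|\bmod 3$, where $M$ is the maximal subgroup containing the chosen set just before the losing move; in particular, Player~3 wins precisely when $M\in\mathcal{M}_3$. Thus the covering properties of $\mathcal{M}_3$ translate directly into who can force or prevent $M\in\mathcal{M}_3$.

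For case~(1), I would give Player~3 the following strategy. On move~$3$, after Players~1 and~2 have played $g_1$ and $g_2$, the 2-cover hypothesis supplies some $M\in\mathcal{M}_3$ with $\{g_1,g_2\}\subseteq M$. Since $3\mid |M|$, Cauchy's Theorem applied to $M$ gives an element $t\in M$ of order~$3$. Player~3 plays $t$, or if $t\in\{g_1,g_2\}$ then any element of $M\setminus\{g_1,g_2\}$ (nonempty since $|M|\ge 3$). Because $\langle g_1,g_2,t\rangle\subseteq M\ne G$, Player~3 does not lose on this move, and the chosen set now contains an order-$3$ element. By Lagrange's Theorem every maximal containing any subsequent state of the chosen set lies in $\mathcal{M}_3$, so the final $M$ satisfies $|M|\equiv 0\pmod 3$ and Player~3 wins.

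For case~(2), I would show that whatever $g_1$ Player~1 plays, Player~2 has a response that denies Player~1 first place. By 1-cover, $g_1$ lies in some $M\in\mathcal{M}_3$, and since $|M|\ge 3$ Player~2 may play $g_2\in M\setminus\{g_1\}$, making $\{g_1,g_2\}\subseteq M$; Player~3 then applies the case-(1) strategy, winning with Player~2 as runner-up. Alternatively, the failure of 2-cover may allow Player~2 a ``bad-pair'' response $g_2$ with $\{g_1,g_2\}$ in no $M\in\mathcal{M}_3$, forcing $|M|\not\equiv 0$ and a sub-game yielding $|M|\equiv 1$ or $2\pmod 3$. By Player~2's preferences, Player~2 takes the bad-pair option only when the sub-game gives $|M|\equiv 2$ (Player~2 first, Player~1 runner-up) and otherwise takes the good-pair option (Player~3 first, Player~1 loser). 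Either way, Player~1 never ends up first, so Player~1 has no winning strategy.

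For case~(3), I would have Player~1 play a blocking element $g^*\in G$ that lies in no $M\in\mathcal{M}_3$, supplied by the failure of 1-cover. Then on any turn $k\ge 2$ the element $g_1=g^*$ belongs to the chosen set before the losing move, so the final maximal subgroup contains $g^*$ and hence is not in $\mathcal{M}_3$, giving $|M|\not\equiv 0\pmod 3$ and blocking Player~3's victory. The main obstacle is to choose $g^*$ as a non-generator so that Player~1 does not lose on move~$1$: this is automatic when $\mathcal{M}_3=\emptyset$ (take $g^*=e$) or when $G$ is non-cyclic (no single element generates $G$), and the remaining small cyclic cases require separate attention via the structure of $G$.
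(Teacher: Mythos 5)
Your proof follows the paper's own argument essentially step for step: in case (1) you locate a common $M\in\mathcal{M}_3$ for $\{g_1,g_2\}$ and have Player~3 play an order-$3$ element of $M$, after which Lagrange's Theorem locks the final maximal subgroup into $\mathcal{M}_3$; in case (2) you observe that Player~2 can always steer the first two selections into a common member of $\mathcal{M}_3$ and, by the preference ordering, will do so unless she can win outright, so Player~1 never finishes first (the paper has Player~2 play the order-$3$ element herself on move~2 rather than deferring to Player~3, an immaterial difference); in case (3) Player~1 opens with an element outside $\cup\mathcal{M}_3$.

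The one point where you go beyond the paper is the worry in case (3) about whether $g^*$ is a legal first move, i.e.\ a non-generator. The concern is legitimate, but it cannot be patched by ``separate attention to small cyclic cases'': for $G\cong\mathbb{Z}_9$ the set $\mathcal{M}_3$ consists of the unique maximal subgroup, which has order $3$ and fails to $1$-cover $G$, yet every non-generator lies in that subgroup, so Player~3 wins regardless of strategy and item~(3) is false as literally stated. The resolution is that the paper's definition of $n$-covering is prefaced by ``Let $G$ be a noncyclic finite group,'' so the theorem must be read with that standing hypothesis; for noncyclic $G$ the full set of maximal subgroups covers $G$, no single element generates, and your $g^*$ is automatically a legal move, as you yourself note. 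With that reading your proof and the paper's coincide and are both complete.
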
 
\begin{proof}
Suppose that $\mathcal{M}_3$ $2$-covers $G$ and Players 1 and 2 select element $x$ and $y$ of $G$ initially.  Since $\mathcal{M}_3$ $2$-covers $G$, there is a maximal subgroup $M \in \mathcal{M}_3$ such that $x,y \in M$.  Then Player~3 can select any element of order $3$ in $M$ to ensure that the game's final maximal subgroup is divisible by $3$.

Now suppose that $\mathcal{M}_3$ covers $G$ but does not $2$-cover $G$ and Player~1 selects an element $x$ of $G$ initially.  Because $\mathcal{M}_3$ $1$-covers $G$, there is a maximal subgroup $M$ containing $x$.  Player~2 does not want Player~1 to win, so Player~2 will either choose an element that ensures victory for herself or select an element of order $3$ from $M$ to ensure that Player~1 does not win.

Now assume that $\mathcal{M}_3$ does not cover $G$.   Then Player~1 can choose an element not contained in $\cup \mathcal{M}_3$ to ensure that Player~3 does not win.
\end{proof}

\end{section}

\begin{section}{Further Questions}

We close with some open questions.

\begin{enumerate}
\item What are the strategies and outcomes for groups other than cyclic, dihedral, and nilpotent groups?
\item Can we generalize these methods for the analogous game involving $p$ players for some prime $p$?  This will allow us to still use Cauchy's Theorem.
\item Can we generalize these results for the analogous game involving $n$ players for any $n \geq 4$? 
\item What are the winning strategies in the analogous achievement game where the player who generates the group wins, rather than loses?
\item Can we improve the statements of Items~\ref{item:1Cover} and~\ref{item:0Cover} of Theorem~\ref{thm:Coverings} to determine exactly which player wins in each case? 
\end{enumerate}

\end{section}

\bibliographystyle{amsplain}
\bibliography{game}

\providecommand{\bysame}{\leavevmode\hbox to3em{\hrulefill}\thinspace}
\providecommand{\MR}{\relax\ifhmode\unskip\space\fi MR }
\providecommand{\MRhref}[2]{%
  \href{http://www.ams.org/mathscinet-getitem?mr=#1}{#2}
}
\providecommand{\href}[2]{#2}
\begin{thebibliography}{1}

\bibitem{anderson.harary:achievement}
M.~Anderson and F.~Harary, \emph{Achievement and avoidance games for generating
  abelian groups}, Internat. J. Game Theory \textbf{16} (1987), no.~4,
  321--325.

\bibitem{BeneshErnstSiebenDNG}
B.J. Benesh, D.C. Ernst, and N.~Sieben, \emph{Impartial avoidance games for
  generating finite groups}, North-Western European Journal of Mathematics
  \textbf{2} (2016), 83--102.

\bibitem{Isaacs1994}
I.M. Isaacs, \emph{Algebra: a graduate course}, vol. 100, American Mathematical
  Society, Providence, RI, 2009, Reprint of the 1994 original.

\end{thebibliography}
\end{document}